\newtheorem{lemma}{Lemma}
\newtheorem{battle}{Battle}
\newtheorem{theorem}{Theorem}
\newtheorem{remark}{Remark}
\newtheorem{corollary}{Corollary}
\begin{document}


\title{The Battle of Infinity: Explosive Demand Surge vs Gigantic Service Providers}

\author{Hiroshi Toyoizumi}
 \altaffiliation[Also at ]{Dept. of Applied Mathematics, Waseda University.}
\affiliation{%
Graduate School of Accountancy, Waseda University
}%

%


\date{\today}

\begin{abstract}
Infinite server queues have ultimate processing power to accommodate explosive demand surges. We provide a new stability criterion based on the Borel-Cantelli lemma to judge whether the infinite server safely accommodates heavy-tailed demands. We illustrate the battles between heavy-tailed demand and infinite servers in detail. In particular, we show some cases where the explosive demand overwhelms the infinite server queue. The medical demand caused by pandemics such as the COVID-19 creates huge stress to the healthcare system. This framework indicates that healthcare systems need to account for the tail behavior of the cluster size and hospital stay length distributions to check the stability of their systems during pandemics.

==========

[Popular Summary]

Healthcare systems are under pressure due to the COVID-19 pandemic. Social gatherings may create clusters of infections, and the resulting patient stream causes a shortage of beds, medical supplies, and medical staff. In order to meet the explosive surge of medical demand, healthcare systems are reinforced, sometimes even by building temporary hospitals overnight.

The infinite server queue is the ultimate model of such an idealized hospital that serves any number of patients without delay. We find new criteria for the stability of the infinite server queue: the balance of the tail behavior of the cluster size and hospital stay length. Even though each cluster brings a finite number of patients, if the stochastic fluctuation of the cluster and the hospital stay is a volatile power-law tail distribution, the idealized hospital may be overwhelmed by the medical demand and collapse.   

Our results suggest that healthcare systems should take into account the balance of the tail behavior of the cluster size and hospital stay length. Moreover, in some extreme cases, reinforcement of healthcare systems is not the best strategy. Instead, reducing the medical demand by controlling social gatherings and lengths of hospital stays is crucial.

\end{abstract}

\maketitle


\section{Introduction}
In pandemics such as COVID-19, super-spreading events (or clusters) associated with large social gatherings create sudden explosions of infections, and the resulting stream of patients becomes a huge stress to the healthcare system \cite{On-Kwok:2020bs,Iritani:2020xr}. Infinite server queues that can provide immediate services to any number of patients simultaneously are the ultimate models of idealized healthcare systems.  

For another example, in the era of social media, cascaded demand buildup to the most popular service in a short period of time is also common in the winner-takes-all-type competitive market \cite{easley2010networks,levis2009winners,porter2001strategy,prakash2012winner}. Friend networks on social media are modelled by scale-free networks, and the surges in demand for popular services have power-law distributions. Infinite server queues may be used to analyze such service providers accommodating huge surges in demand.

To analyze the stability of systems, we employ a method called queuing theory, which has been developed to evaluate complex service systems \cite{baccelli2013elements,wolff1989stochastic,kleinrock1975queueing}.  In particular, we use an infinite server queue with the batch arrival of customers for modelling the idealized system accommodating the explosive demand sureges. Even though the term may suggest otherwise, infinite server queues do not make customers wait in queues, because they have an infinite number of servers. For example, an infinite server queue can be an idealized hospital that provides immediate treatment to any number of patients arriving simultaneously, or it can be a gigantic platformer that provides immediate service to cascaded demand fuelled by social media. We show that these ideal hospitals or gigantic platformers may be overtaken by demand surge, and thus, would explode.  This shows that, there may be some extreme cases: we need to focus on controlling the demand surges by suppressing the size of social gatherings or reducing hospital stay length.

\section{Discrete fractional power law distribution}
In order to model demand surges, we define a family of power law distributions in the form of a generalized geometric distribution parameterized with $p>0$ as
\begin{align}\label{eq: Power Law Type}
P(X=k) = \left(1-\frac{k}{p+k}\right) \prod_{i=1}^{k-1}\frac{i}{p+i}. 
\end{align}
for $k=1,2, \dots $. Note that this  discrete $p$-th order fractional power law distribution is a generalized geometric distribution with a non-homogenous parameter $i/(p+i)$ \cite{Mandelbaum2007}, which has an asymptotic tail probability of the order of $O(k^{-p})$, and the moment $E[X^{q}] = \infty$ for $q\geq p$ (see Figure \ref{fig: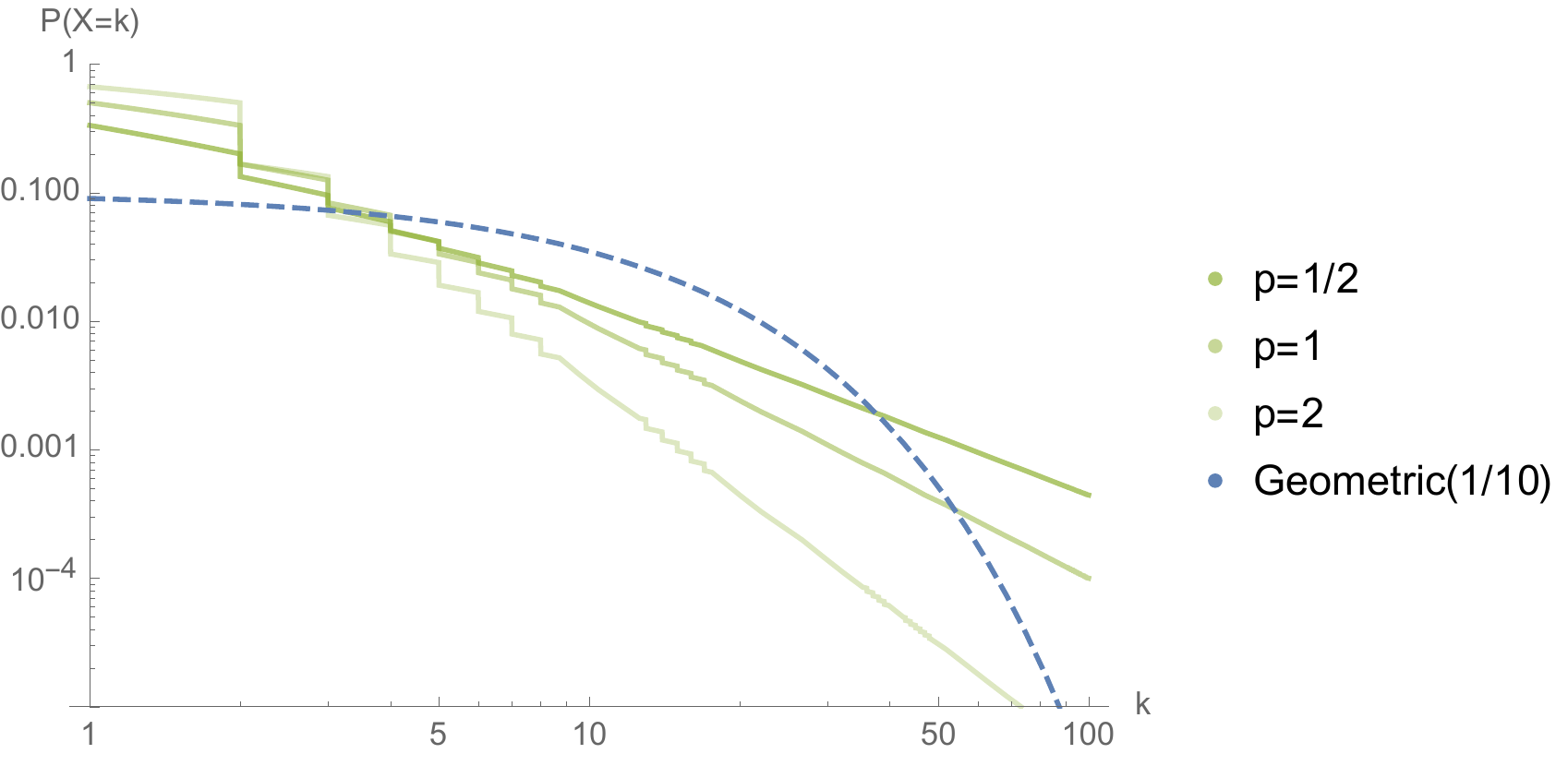}).  
\begin{figure}[tbp]
\begin{center}
\includegraphics[width=0.5\textwidth]{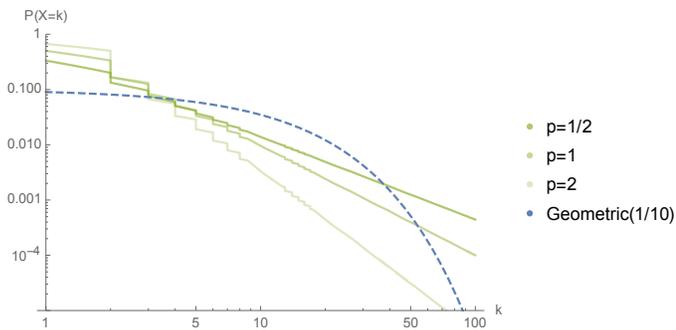}
\caption{Log-log plots of the $p$-th order discrete fractional power law distribution \eqref{eq: Power Law Type} for $p=1/2, 1,2$ (the tail gets heavier for smaller values of $p$), which are compared with the geometric distribution with the parameter $1/10$.}
\label{fig:LoglogPlot.pdf}
\end{center}
\end{figure}
When $p$ is a positive integer, \eqref{eq: Power Law Type} is reduced to 
\begin{align}\label{eq: Power Law Type Integer}
P(X=k) = \frac{p \cdot p!}{k(k+1) \cdots (k+p)}, 
\end{align}
It is known that preferentially attached (scale-free) networks have degree distributions given by \eqref{eq: Power Law Type Integer} with $p=2$, and they have the infinite second moment \cite{dorogovtsev2000structure,durrett2007random}. Friend networks are considered preferentially attached networks, and may serve as suitable models for the size distribution of social gatherings.

\section{Infinite server queue and its stability}\label{section:Infinite server queue and its stability}
We consider the infinite server queue with batch arrivals. Let $\dots < T_{-1}< T_{0} \leq 0 < T_{1} < T_{2}<\dots$ be the sequence of the batch arrival times with finite intensity $\lambda$. Each customer in a batch of size $X_{n}$ arriving at $T_{n}$ is served immediately, and stays in the system for $S_{n,1}, S_{n,2}, \dots , S_{n,X_{n}}$, and then leaves the system. We allow a large stochastic fluctuation of the batch size such that $E[X_{n}] = \infty$, but assume that batch sizes and sojourn times are finite, i.e., $P(X_{n} < \infty)=1$ and $P(S_{n,i} < \infty)=1$.  

We evaluate $L$, the number of customers existing in the system at time $0$, given that the system started sufficiently long before. In this setting, by adding the older batches one by one from time $0$, $L$ is regarded as its monotone-increasing limit. Although batch sizes are finite and each customer spends a finite time in the system, $L$ can be infinite, causing the system to explode. We aim to obtain the condition under which the system is stable with the condition $P(L<\infty) = 1$.

Let $A_{-k,i}= \{S_{-k,i}>-T_{-k}\}$ be events where the customer $i$ in the batch arriving at $T_{-k}$, and is still in the system at time $0$. Then, by summing up all the values, we have
\begin{align}
L = \sum_{k=0}^{\infty}\sum_{i=1}^{X_{-k}}1_{A_{-k,i}}.
\end{align}
By applying the Borel-Cantelli lemma to possibly correlated events $B_{-k}= \cup_{i=1}^{X_{^{k}}}A_{-k,i}$, we can prove that the system is stable when $P(L<\infty) = 1$, if the batch arrival rate $\lambda$ and $E[\max_{1\leq i \leq X_{k}}S_{i}]$ are finite (see details in Appendix \ref{Appendix: Stability Criteria}).

Thus, balancing the sojourn time $S$ (hospitalization time) and the batch size $X$ (cluster size) is necessary to achieve stability. Specifically, using the Holder and Jensen inequalities, we extend the idea presented in \cite{Devroye:1979fk} to obtain the bound of $E[\max_{1\leq i \leq X}S_{i}]$ as
\begin{align}\label{ineq: Sx}
E\left[\max_{1\leq i \leq X}S_{i}\right] \leq \{ E[S^{p}]\}^{1/p} E[X^{1/p}]. 
\end{align}
when $X$ and $S$ are independent and $E[X^{1/p}] < \infty$ and $E[S^{p}] < \infty$ for some $p>0$ (see details in Appendix \ref{Appendix:The Balance Criteria of Stability}).

\section{Battles against explosive demand surges}\label{section:Battles against explosive demand surges}
In the following, we further assume that the batch arrival is a Poisson process with rate $\lambda$, which corresponds to assuming that the social gathering events are independently and randomly organized. In this case, we can explicitly derive the probability generating function of $L$ as
\begin{align}\label{eq:generating function of L}
E\left[z^{L}\right]=\exp \left[ - \lambda \left\{ \int_{0}^{\infty}\left(1- E\left[z^{M(s)}\right]\right)ds \right\}\right]. 
\end{align}
if it exists, where $M(t)=\sum_{i=1}^{X}1_{\{S_{i}>t\}}$ (see details of the derivations in Appendix \ref{Appendix:Infinite Server Queues with Poisson Batch Arrival}). 

We analyze the battle of infinite server queue against the explosive demand surge in detail using the balance criteria \eqref{ineq: Sx}, and the discrete fractional power law distribution defined by \eqref{eq: Power Law Type}, where \eqref{eq:generating function of L} can be explicitly calculated via \eqref{eq: stationary pgf for independent sojourn time} in Appendix \ref{Appendix:Infinite Server Queues with Poisson Batch Arrival}.

\begin{battle}[Light-tailed sojourn time; the infinite server always wins]\label{example: Power-law batch size and exponential sojourn time; the infinite server wins}
We assume that the sojourn times are independent and exponentially distributed (set $E[S] =1/\mu$), and the batch size $X$ is a discrete fractional power law distribution with $p=1$. Even though $E[X]$ is infinite, the tail of the sojourn time is sufficiently light to have finite $E[\max_{1\leq i \leq X}S_{i}] \approx E[\log X]$, and then, the queue is stable (see Corollary \ref{corollary:log bound} in Appendix \ref{Appendix:Infinite Server Queues with Poisson Batch Arrival} and \cite{Cong:1994fk,Yajima:2017fv}). In this case, the probability generating function of the stationary distribution $L$ is given by
\begin{align}
E\left[z^{L}\right]=\exp \left[ - \rho \sum_{n=1}^{\infty}\frac{1-z^{n}}{n^{2}} \right]. 
\end{align}
where $\rho = \lambda/\mu$ (see details in Appendix \ref{Appendix:Infinite Server Queues with Poisson Batch Arrival}). 
Note that $L$ itself is heavy-tailed, and $E[L] = \infty$). Figure \ref{fig: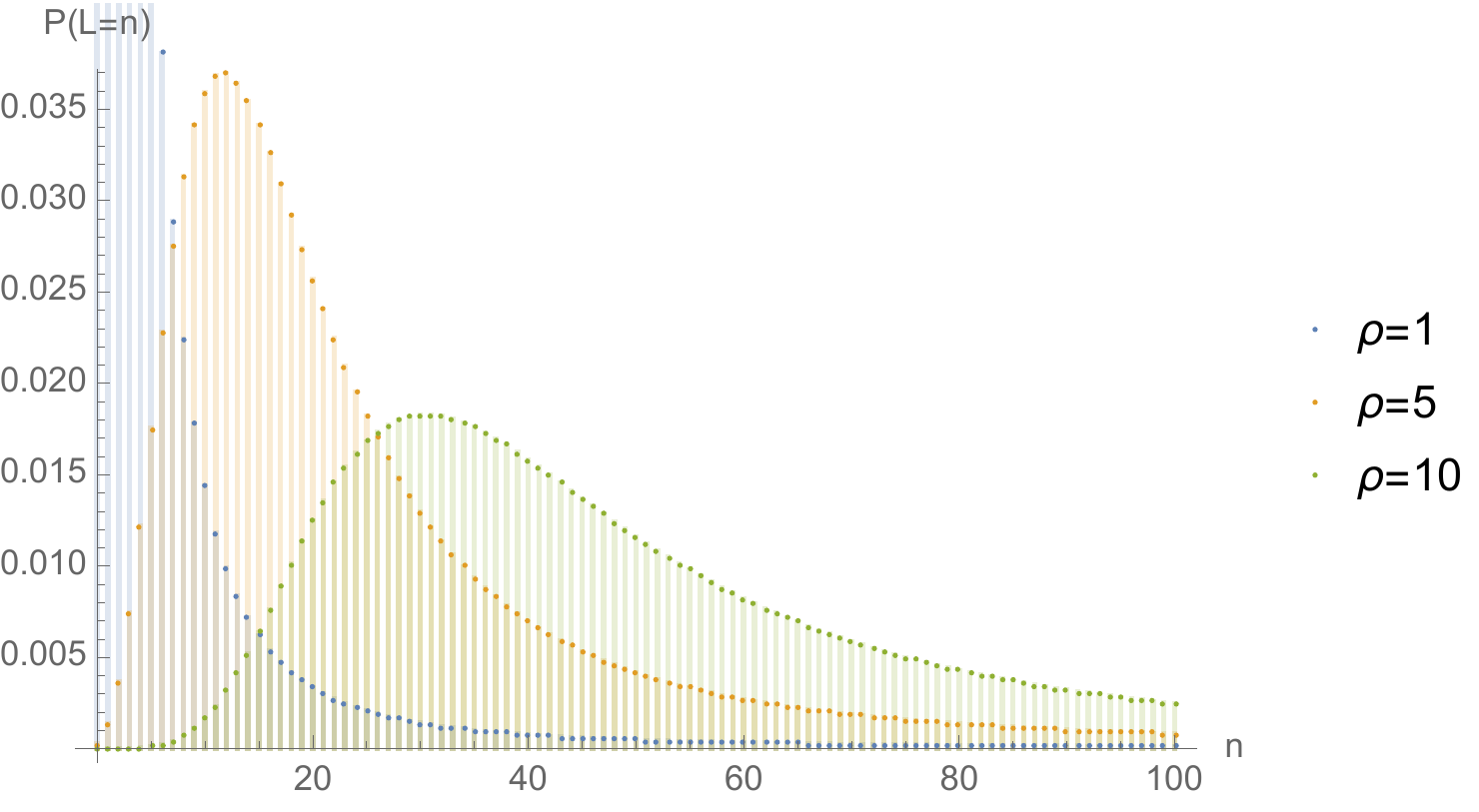} shows examples of the probability distribution $P(L=n)$, obtained by checking the expansion with respect to $z$ around $0$.
\end{battle}

\begin{figure}[tbp]
\begin{center}
\includegraphics[width=0.5\textwidth]{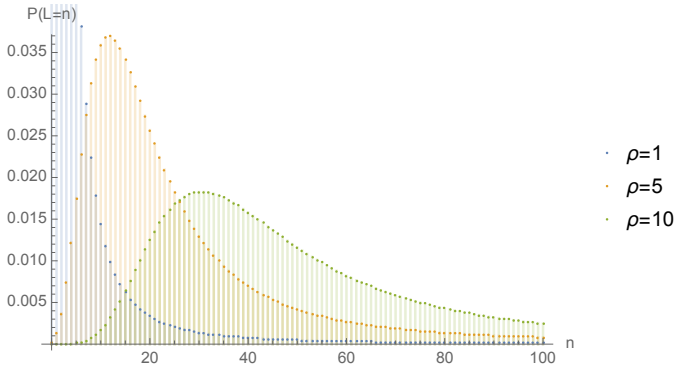}
\caption{Examples of the stationary distribution $P(L=n)$ in Battle \ref{example: Power-law batch size and exponential sojourn time; the infinite server wins}. The batch size $X$ is the discrete power law  distribution with $p=1$ and $E[X] = \infty$. The sojourn time is light-tailed, and the queue is stable.}
\label{fig:distribution.pdf}
\end{center}
\end{figure}
\begin{figure}[htbp]
\begin{center}
\includegraphics[width=0.4\textwidth]{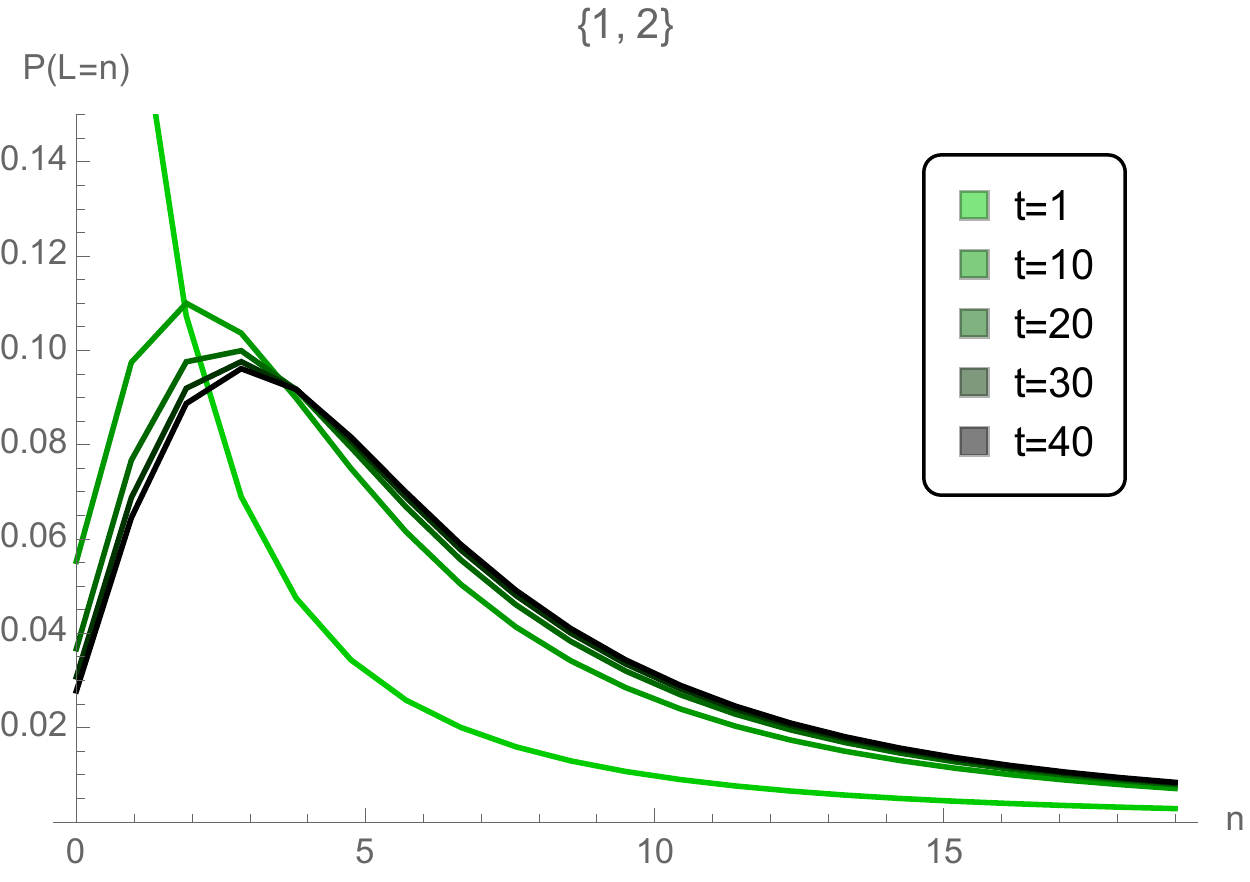}
\includegraphics[width=0.4\textwidth]{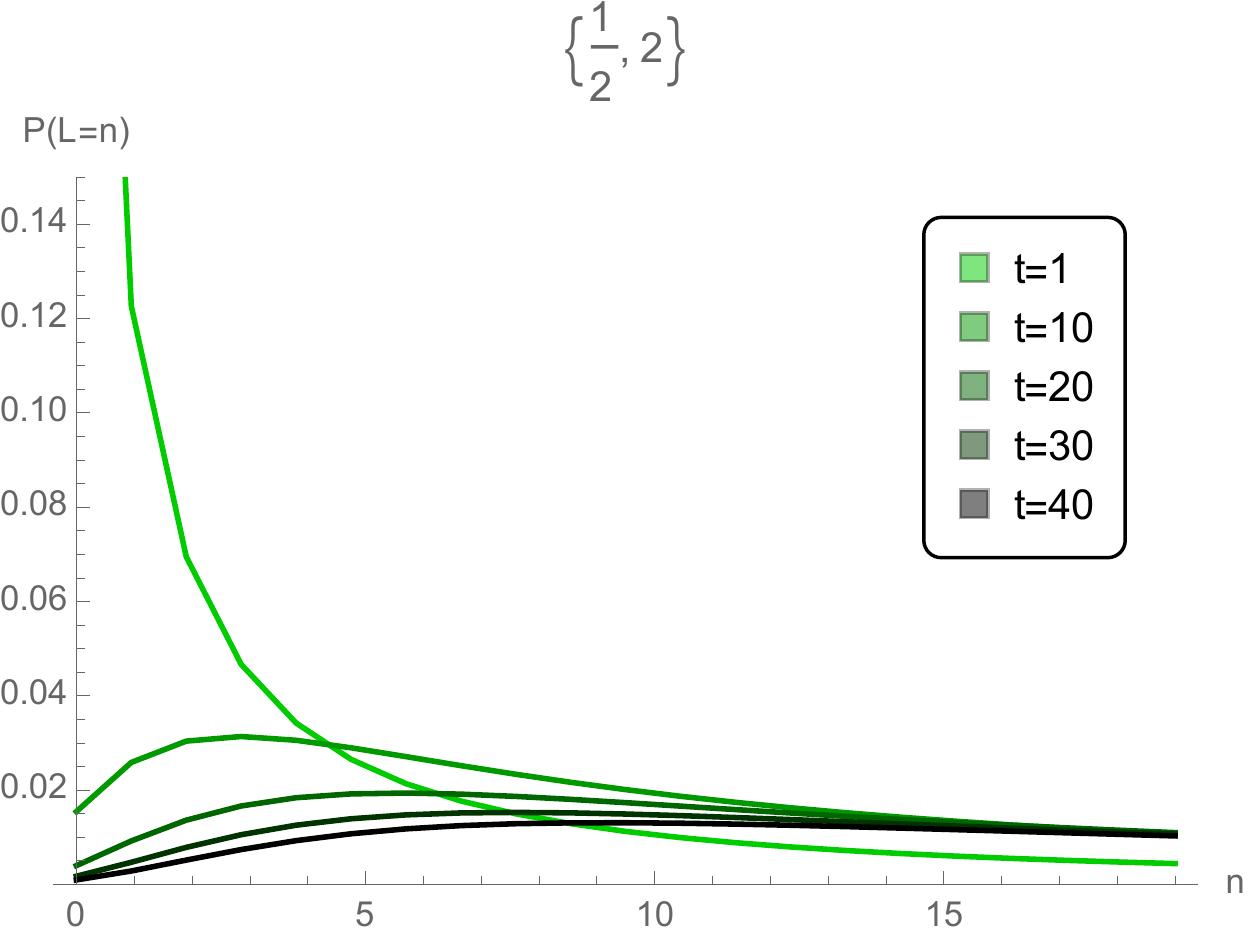}
\includegraphics[width=0.4\textwidth]{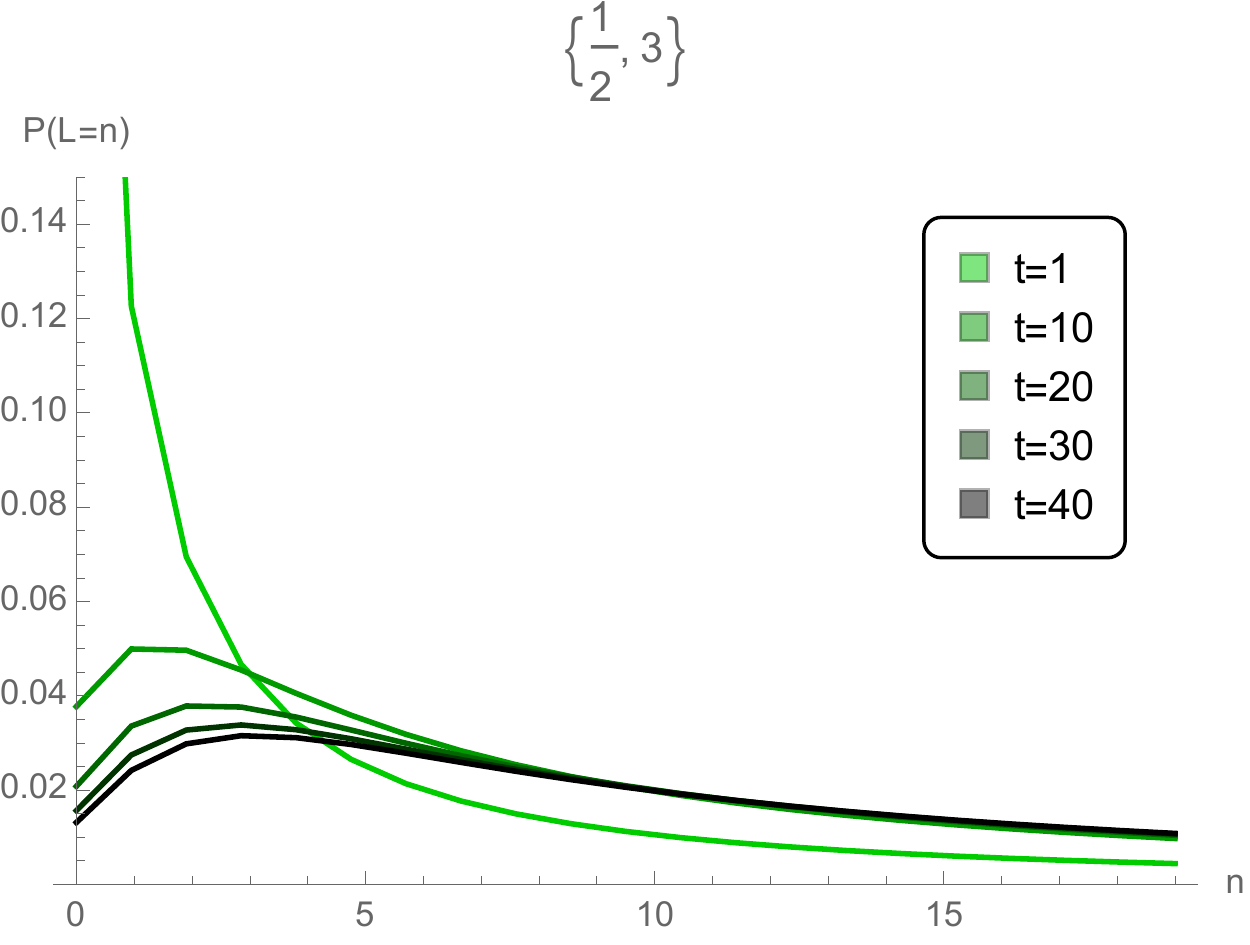}
\caption{The transient distribution $P(L(t)=n)$ of the infinite queues with the heavy-tailed arrivals $(p,q)=(1,2), (1/2,2)$ and $(1/2,3)$, and the batch arrival rate $\lambda =1$. }
\label{fig:transientPGF(1,2).pdf}
\end{center}
\end{figure}

Next, we analyze cases in which the batch size $X$ is a $p$-th order fractional power law discrete distribution and the sojourn time $S$ is also an independent $q$-th order power law discrete distribution. Their pair is called the heavy-tailed arrival $(p,q)$. In healthcare systems, hospital stays may be fitted to heavy-tailed distributions due to few severely ill patients \cite{S.:2018zc}.   By the balance criteria \eqref{ineq: Sx}, the infinite server queue with heavy-tailed arrival $(p,q)$ is stable when $pq>1$ and $q>1$. Thus, healthcare systems have to take into account statistical features such as the tail behavior of the cluster size and the hospital stay length distributions to ensure the stability of their systems, as can be seen in the following battle examples.

\begin{battle}[the heavy-tailed arrival $(1,2)$; the infinite server wins]
Consider the heavy-tailed arrival $(p,q)=(1,2)$. The sojourn time is heavy-tailed, but relatively mild ($E[S^{2}] = \infty$ but $E[S] < \infty$). Thus, the queue is stable, and
\begin{align}
E\left[z^{L}\right] = \exp\left[\lambda \sum_{k=1}^{\infty}  \frac{\alpha_{k}(z)-1}{\alpha_{k}(z)}\log (1-\alpha_{k}(z)) \right]. 
\end{align}
 where $\alpha_{k}(z) = 1-(1-z) P(S\geq k) = 1-\frac{2(1-z)}{k(k+1)}$.
The top figure of Figure \ref{fig:transientPGF(1,2).pdf} shows the transient behavior of the probability distribution converging to this steady-state distribution.
\end{battle}

\begin{battle}[the heavy-tailed arrival $(1/2,2)$; the explosive demand wins]\label{battle:the explosive demand wins}
Consider the heavy-tailed arrival $(p,q)=(1/2,2)$ that satisfies $E[\log X] < \infty, E[\sqrt X] = \infty$ and $E[S] < \infty, E[S^{2}] = \infty$. In this case, the demand surge is too explosive, so the system is overwhelmed by the demand, and it is unstable in the sense that $P(L=\infty)=1$. Since $\sin ^{-1}(x) \to \pi/2$ as $x \to 1$, we have
\begin{align}
E\left[\max_{1\leq i \leq X}S_{i}\right] = \sum_{k=1}^{\infty}\frac{\sin ^{-1}\left([1-1/\{(k(k+1)\}]^{1/2}\right)}{ \{k(k+1)-1\}^{1/2}} = \infty. 
\end{align}
Thus, by Theorem \ref{thm: ESX finite}, $P(L=\infty)=1$. The middle figure of Figure \ref{fig:transientPGF(1,2).pdf} shows the transient behavior of the probability distribution $P(L(t)=n)$, and shows that $L$ is escaping to infinity. Thus, no matter how fast we reinforce the capacity in order to realize the idealized healthcare system, collapse is inevitable.
\end{battle}

\begin{battle}[the heavy-tailed arrival $(1/2,3)$; the infinite server wins]
Consider the heavy-tailed arrival $(p,q)=(1/2,3)$. The batch size fluctuates severely as in the previous example, but the sojourn time is more gentle. The queue is stable, because $pq>1$, and we have
\begin{align}
E\left[z^{L}\right] = \exp\left[\lambda \sum_{k=1}^{\infty} \left(\frac{1-\alpha_{k}(z)}{\alpha_{k}(z)}\right)^{1/2}\sin ^{-1}\left(\sqrt{\alpha_{k}(z)}\right)\right].
\end{align}
where $\alpha_{k}(z) = 1-(1-z) P(S\geq k) = 1-6(1-z)/\{k(k+1)(k+2)\}$. The bottom figure of Figure \ref{fig:transientPGF(1,2).pdf} shows that $L$ does not escape to infinity, even though the arrival is heavy-tailed in both the batch size $X$ and the sojourn time $S$.
\end{battle}

\appendix
\section{Stability Criteria}\label{Appendix: Stability Criteria}

It is known that a $G/G/\infty$ queue \footnote{the symbol $A/B/c$ is called the Kendall symbol of the queuing system, with the arrival process $A$, the service demand $B$, and the number of servers $c$. $G/G/\infty$ means that both the arrival process and the service demand are general and dependent, and have infinitely many servers. On the other hand, $M/M/\infty$ means that the arrival process is Poisson process and the service demand is independent exponential distribution.} with infinitely many servers to process a stream of customers (no batch arrival) with generally-distributed sojourn time is stable when the arrival rate and the expectation of sojourn times of the customers are both finite \cite[p133]{baccelli2013elements}. Few studies \cite{Cong:1994fk,Yajima:2017fv} have extended this fundamental result to the case when the batches of customers arrive as a Poisson process. Even when the arrival is so heavy-tailed that the expected batch size is infinite, the infinite server can process the demand, and the queue is stable, given the batch size satisfies $E[\log X] < \infty$. Here, we obtain a new criterion to check the stability of the batch arrival $G^{X}/G/\infty$ queue ($X$ represents the batch arrival), when the sojourn times of customers may be correlated.  
%

\begin{figure}[tbp]
\begin{center}
\includegraphics[width=0.4\textwidth]{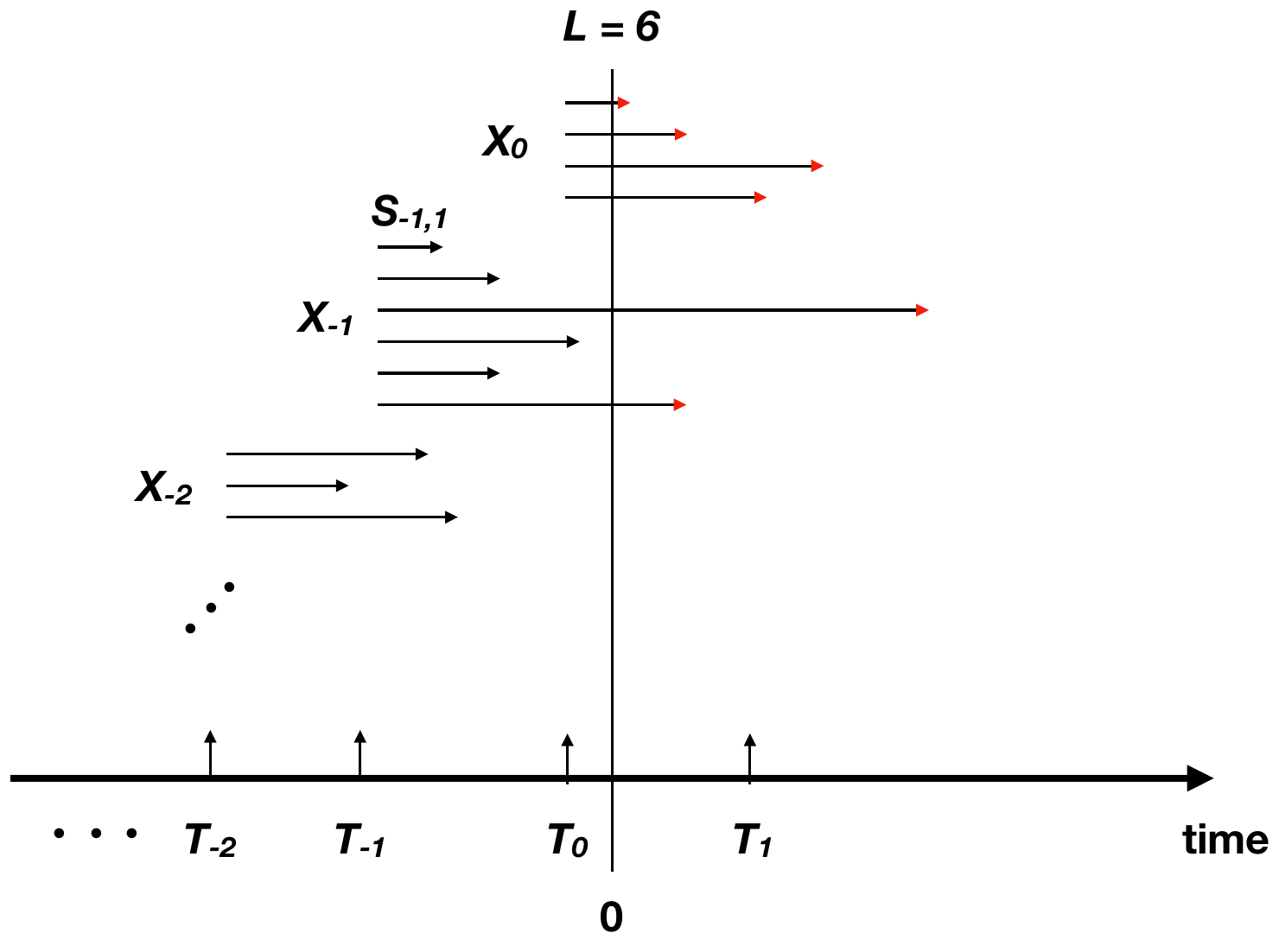}
\caption{Examples of a sample path of the infinite server queue and $L$, the number of customer in the system at time $0$.  The arrows indicate the sojourn time of each customer who arrived in the batch.  The red-end arrows indicate that the customers is in the system at time $0$, i.e. they are in $A_{-k,i}$.  Thus, $L = 6$ in this example.}
\label{fig:L.pdf}
\end{center}
\end{figure}
\begin{figure}[tbp]
\begin{center}
\includegraphics[width=0.4\textwidth]{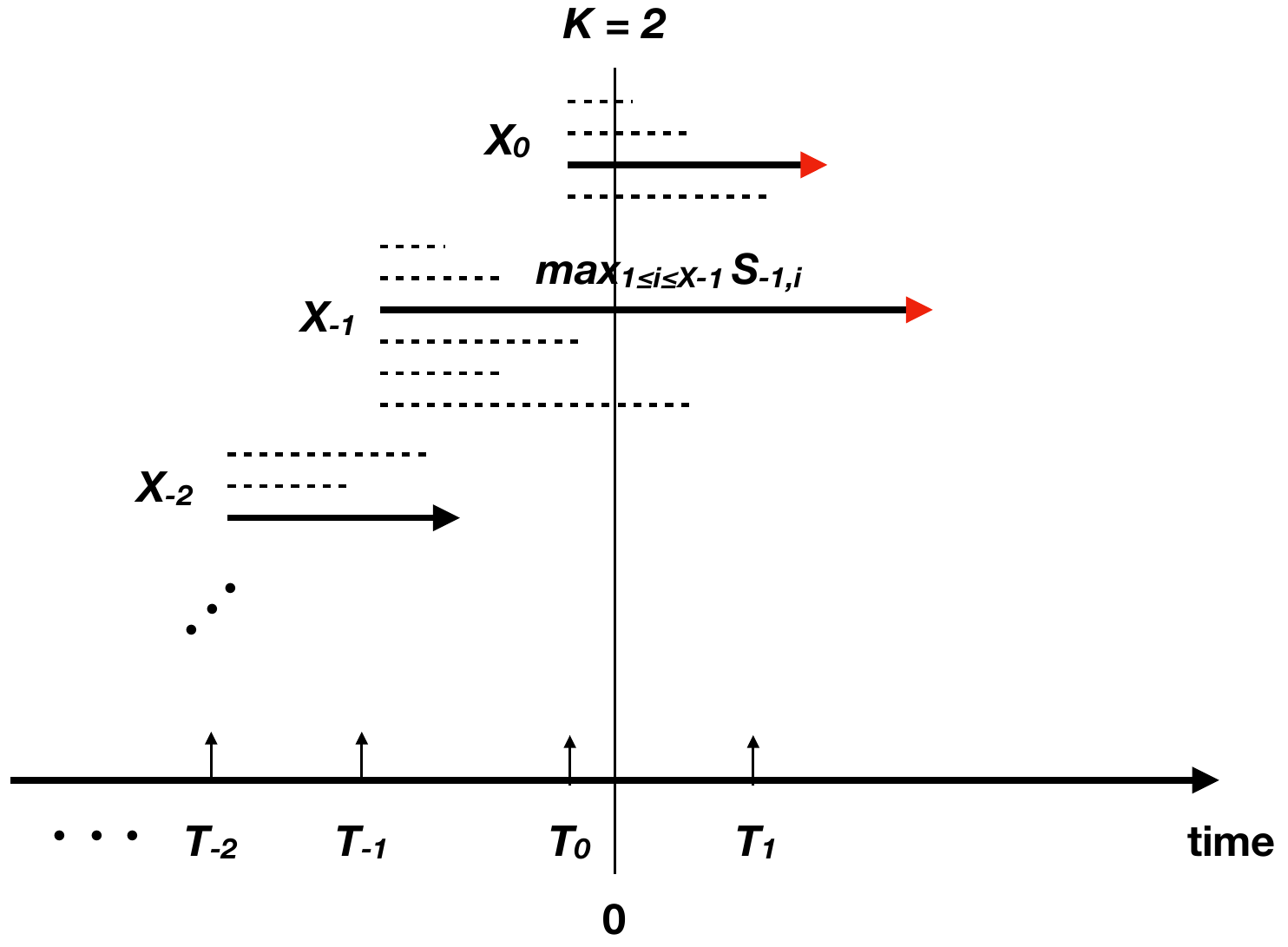}
\caption{Examples of a sample path of super customers.  The thick arrows indicate the sojourn time of super customers.  The red-end arrows indicate that the super customers who are in the system at time $0$, i.e. they are in $B_{-k}$.  Thus, $K = 2$ in this example.}
\label{fig:K.pdf}
\end{center}
\end{figure}

As discussed in Section \ref{section:Infinite server queue and its stability} (see also Figure \ref{fig:L.pdf}), $L$, the number of customers in the system at time $0$ can be expressed as
\begin{align}\label{eq: L }
L = \sum_{k=0}^{\infty}\sum_{i=1}^{X_{-k}}1_{A_{-k,i}}.
\end{align}
Let $B_{-k}= \cup_{i=1}^{X_{^{k}}}A_{-k,i} = \{\max_{1\leq i\leq X_{-k}}S_{-k,i}>-T_{-k}\}$ be the event when least one customer in the batch arriving at $T_{-k}$ is still in the system at time $0$. Then, 
\begin{align}\label{eq:def of K}
K = \sum_{k=0}^{\infty}1_{B_{-k}}, 
\end{align}
is the number of super-customers whose sojourn times are assigned as $\max_{1\leq i \leq X_{k}}S_{i}$ (see Figure \ref{fig:K.pdf}). Using the monotone convergence theorem, we have 
\begin{align}\label{eq:EK}
E[K] = E\left[\sum_{k=0}^{\infty}1_{B_{-k}}\right] = \sum_{k=0}^{\infty} P(B_{k}), 
\end{align}
If $E[K] < \infty$, we have $P(K=\infty) = P(B_{-k} \text{ i.o.}) = 0$ by the Borel-Cantelli lemma. Indeed, if $P(K=\infty) > 0$, then $E[K]$ should be infinite.

Consider a $G/G/\infty$ queue with the arrival of super-customers at $T_{n}$, and their sojourn time with $ \max_{1\leq i \leq X_{k}}S_{i}$. If $ E[\max_{1\leq i \leq X_{k}}S_{i}] < \infty$, then by Little's formula (see \cite{baccelli2013elements,wolff1989stochastic,kleinrock1975queueing}, for the $G/G/\infty$ queue,
\begin{align}
E[K] = \lambda E\left[\max_{1\leq i \leq X}S_{i}\right] < \infty, 
\end{align}
Thus, $P(K<\infty) = 1$ (this is a known result of the stability for the $G/G/\infty$ queue, \cite[p133]{baccelli2013elements}). By \eqref{eq: L }, 
\begin{align}
L \leq \sum_{k=0}^{\infty}1_{B_{-k}}X_{-k}, 
\end{align}
Since $K$ is finite, the sum on the right-hand side is indeed a finite sum of finite quantities. Thus, $L<\infty$. Consequently, we have the following result: 
\begin{theorem}\label{theorem: stability}
If the batch arrival rate $\lambda$ and $E[\max_{1\leq i \leq X_{k}}S_{i}]$ are finite, then $G^{X}/G/\infty$ queue is stable in the sense of $P(L<\infty) = 1$.
\end{theorem}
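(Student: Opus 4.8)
The plan is to bound $L$ from above by a quantity that counts whole \emph{batches} rather than individual customers, and then to show that only finitely many past batches can still be represented in the system at time $0$. The first step is a pointwise domination: for the batch that arrived $k$ steps in the past, if the event $B_{-k}=\{\max_{1\leq i\leq X_{-k}}S_{-k,i}>-T_{-k}\}$ fails it contributes nothing to $L$, whereas if $B_{-k}$ holds it contributes at most its entire size $X_{-k}$; hence, from the representation of $L$,
\begin{align}
L \;=\; \sum_{k=0}^{\infty}\sum_{i=1}^{X_{-k}}1_{A_{-k,i}} \;\leq\; \sum_{k=0}^{\infty}1_{B_{-k}}\,X_{-k}.
\end{align}
Because $P(X_n<\infty)=1$ by assumption, it now suffices to prove that the number of ``live'' batches $K=\sum_{k\geq 0}1_{B_{-k}}$ in \eqref{eq:def of K} is almost surely finite: on the event $\{K<\infty\}$ the right-hand side above is a finite sum of finite terms, and therefore finite.

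Next I would show $E[K]<\infty$. By monotone convergence, $E[K]=\sum_{k\geq 0}P(B_{-k})$ as in \eqref{eq:EK}. The key observation is that $K$ is precisely the time-$0$ occupancy of an auxiliary $G/G/\infty$ queue fed by ``super-customers'': one per batch, arriving at the epochs $T_n$ at rate $\lambda$ and carrying the sojourn time $\max_{1\leq i\leq X_n}S_{n,i}$, where the within-batch sojourn times are allowed to be correlated. Invoking the classical stability identity for $G/G/\infty$ queues --- Little's law applied to this auxiliary system \cite{baccelli2013elements} --- we obtain
\begin{align}
E[K] \;=\; \lambda\,E\!\left[\max_{1\leq i\leq X}S_i\right] \;<\;\infty,
\end{align}
using the finiteness of both $\lambda$ and $E[\max_{1\leq i\leq X}S_i]$. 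From $E[K]<\infty$ we conclude $P(K=\infty)=0$ --- equivalently, $P(B_{-k}\text{ i.o.})=0$ by the Borel--Cantelli lemma --- so $K<\infty$ almost surely, and combining this with the domination above gives $P(L<\infty)=1$.

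The step I expect to be the main obstacle is the identity $E[K]=\lambda\,E[\max_{1\leq i\leq X}S_i]$ for a \emph{general} stationary batch-arrival stream: one must verify that the super-sojourn marks $\max_{1\leq i\leq X_n}S_{n,i}$ inherit the stationarity required to apply the $G/G/\infty$ result, and that the relevant quantity is the mean of those marks rather than $E[X]\,E[S]$ --- which is exactly where the balance between a heavy-tailed batch size $X$ and a sufficiently light sojourn time $S$, as quantified by \eqref{ineq: Sx}, becomes decisive. Once this identity (or, alternatively, a direct Campbell-measure evaluation of $\sum_{k}P(B_{-k})$) is in hand, the Borel--Cantelli lemma together with the elementary bound $L\leq\sum_{k}1_{B_{-k}}X_{-k}$ closes the argument immediately.
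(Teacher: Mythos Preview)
Your proposal is correct and follows essentially the same route as the paper: introduce the super-customer queue, apply Little's formula to obtain $E[K]=\lambda\,E[\max_{1\leq i\leq X}S_i]<\infty$, conclude $P(B_{-k}\text{ i.o.})=0$ via Borel--Cantelli, and combine with the domination $L\leq\sum_{k}1_{B_{-k}}X_{-k}$ to get $P(L<\infty)=1$. The only cosmetic difference is that you present the domination step before the finiteness of $K$, whereas the paper reverses that order; the concern you flag about stationarity of the super-sojourn marks is also implicit in the paper's appeal to the $G/G/\infty$ result of \cite{baccelli2013elements}.
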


\section{The balance criteria of stability}\label{Appendix:The Balance Criteria of Stability}
Using the Holder inequality, we extend the idea in \cite{Devroye:1979fk} to obtain an estimation of the expectation of $S_{(X)}=max_{1\leq i \leq X}S_{i}$, as in \eqref{ineq: Sx}. We assume $S_{(X)}=0$ when $X=0$. First, we prove a general result about the bound of the expectation of the maximum values of correlated random variables.


\begin{lemma}\label{lemma: max}
Let $Y_{1}, Y_{2}, \dots Y_{n}$ be possibly correlated non-negative random variables, all of which have the same marginal distribution denoted by a random variable $Y$.
If $E[Y^{p}]< \infty$ for some $p \in [1, \infty)$, then $Y_{(n)} = \max_{i=1,2, \dots, n}Y_{i}$ has the following estimation: 
\begin{align}
E[Y_{(n)}] \leq \{n E[Y^{p}]\}^{1/p}, 
\end{align}
\end{lemma}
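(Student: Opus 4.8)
The plan is to reduce the whole statement to a single use of Jensen's inequality combined with a pathwise union-type bound, so that the possible correlation among the $Y_i$ never enters. The key preliminary observation is that, since $t\mapsto t^{p}$ is non-decreasing on $[0,\infty)$, the maximum commutes with the $p$-th power: $Y_{(n)}^{p}=\max_{1\le i\le n}Y_{i}^{p}$. This converts the task of bounding $E[Y_{(n)}]$ into the task of bounding $E\big[\max_{i}Y_{i}^{p}\big]$, which is much easier to control.

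First I would apply Jensen's inequality to the convex map $x\mapsto x^{p}$ (convex precisely because $p\ge 1$), obtaining $E[Y_{(n)}]^{p}\le E[Y_{(n)}^{p}]=E\big[\max_{i}Y_{i}^{p}\big]$. Next I would bound the maximum of the non-negative quantities $Y_{1}^{p},\dots,Y_{n}^{p}$ crudely by their sum,
\[
\max_{1\le i\le n}Y_{i}^{p}\;\le\;\sum_{i=1}^{n}Y_{i}^{p},
\]
which holds sample-path by sample-path and therefore requires no independence. Taking expectations, using linearity, and invoking the hypothesis that each $Y_{i}$ has the same marginal law as $Y$ gives $E[Y_{(n)}^{p}]\le\sum_{i=1}^{n}E[Y_{i}^{p}]=nE[Y^{p}]$, which is finite by assumption. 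Chaining the two estimates yields $E[Y_{(n)}]^{p}\le nE[Y^{p}]$, and taking $p$-th roots (legitimate since both sides are non-negative) produces the asserted bound $E[Y_{(n)}]\le\{nE[Y^{p}]\}^{1/p}$.

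There is no substantial obstacle in this argument; it is essentially the extension of Devroye's device to the correlated setting. The only points deserving a word of care are that the condition $p\ge 1$ is exactly what makes $x\mapsto x^{p}$ convex, so that Jensen applies in the direction we need, and that replacing the maximum by the sum is precisely the step that lets us drop any independence hypothesis. It is also worth remarking that $E[Y_{(n)}]$ is automatically well defined in $[0,\infty]$ because the $Y_{i}$ are non-negative, so the same chain of inequalities simultaneously establishes finiteness and the quantitative estimate, with no separate integrability check needed beforehand. Inequality \eqref{ineq: Sx} then follows by specializing to $Y_{i}=S_{i}$ and combining with the Jensen step that moves the conditional expectation through $x\mapsto x^{1/p}$, as carried out in the remainder of this appendix.
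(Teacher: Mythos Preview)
Your argument is correct and in fact more elementary than the paper's own proof. You use only two ingredients: the pointwise bound $\max_{i}Y_{i}^{p}\le\sum_{i}Y_{i}^{p}$ (valid because the summands are non-negative) and one application of Jensen's inequality for the convex map $x\mapsto x^{p}$. The paper instead partitions the sample space according to which index first attains the maximum, writes $E[Y_{(n)}]=\sum_{i}E[Y_{i}\,E[1_{A_{i}}\mid Y_{i}]]$, applies H\"older's inequality to each summand, uses $E[1_{A_{i}}\mid Y_{i}]^{q}\le E[1_{A_{i}}\mid Y_{i}]$ to get $\{P(A_{i})\}^{1/q}$, and finally invokes Jensen for the concave map $x\mapsto x^{1/q}$ to collapse $\sum_{i}P(A_{i})^{1/q}$ to $n^{1/p}$. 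Both routes arrive at exactly the same bound $\{nE[Y^{p}]\}^{1/p}$, but yours avoids the H\"older step and the partition construction entirely, and makes it transparent why no independence assumption is needed. The paper's decomposition, following Devroye, has the potential advantage that the intermediate estimate $E[Y_{(n)}]\le\{E[Y^{p}]\}^{1/p}\sum_{i}P(A_{i})^{1/q}$ could in principle be sharpened if one had additional information on the probabilities $P(A_{i})$; your pathwise bound $\max\le\text{sum}$ gives up that refinement at the outset, but for the purposes of this lemma nothing is lost.
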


\begin{proof}
Assuming that $Y_{1}, Y_{2}, \dots Y_{n}$ are defined in the probability space $(\Omega, P)$, we define the set $A_{i} = \{Y_{i}>\max(Y_{1}, \dots, Y_{i-1}), Y_{i} = Y_{(n)}\}$. This means that $A_{i}$, $Y_{i}$ is the first among possibly multiple random variables attaining the value $Y_{(n)}$. Since $A_{1},\dots, A_{n}$ are disjoint, and $\cup_{i=1}^{n} A_{i} = \Omega$,
\begin{align}\label{eq: E Yn}
E[Y_{(n)}] = \sum_{i=1}^{n}E[Y_{i}1_{A_{i}}]= \sum_{i=1}^{n}E[Y_{i}E[1_{A_{i}}|Y_{i}].
\end{align}
Let $q=p/(p-1) \geq 1$, so that $1/p+1/q =1$. Since $E[1_{A_{i}}|Y_{i}]\leq 1$, we have $\{E[1_{A_{i}}|Y_{i}]\}^{q}\leq E[1_{A_{i}}|Y_{i}]$. Thus,
\begin{align}
E\left[E[1_{A_{i}}|Y_{i}]^{q} \right] \leq P(A_{i}), 
\end{align}
Applying the Holder inequality, we have 
\begin{align}
E[Y_{i}E[1_{A_{i}}|Y_{i}]] &\leq \left\{E[Y_{i}^{p}] \right\}^{1/p}\left\{ E[1_{A_{i}}|Y_{i}]^{q}\right\}^{1/q}. \\
&\leq \left\{E[Y^{p}] \right\}^{1/p} \left\{ P(A_{i})\right\}^{1/q}, 
\end{align}
Using this in \eqref{eq: E Yn} and applying the Jensen's inequality, we have
\begin{align}
E[Y_{(n)}] &\leq \left\{E[Y^{p}] \right\}^{1/p}\sum_{i=1}^{n} \left\{ P(A_{i})\right\}^{1/q}. \\
&\leq \left\{E[Y^{p}] \right\}^{1/p} n \left\{ \sum_{i=1}^{n} \frac{1}{n}P(A_{i})\right\}^{1/q}. \\
&= \left\{E[Y^{p}] \right\}^{1/p}n^{1/p}, 
\end{align}
\end{proof}

\begin{theorem}\label{thm: stability pq >1}
Consider a $G^{X}/G/\infty$ queue with possibly correlated sojourn times $S_{1}, S_{2}, \dots S_{(X)}$ in the batch. If $S$ and $X$ are independent, $E[S^{p}]<\infty$, and $E[X^{1/p}]< \infty$ for some $p\in [1,\infty)$, the queue is stable; that is, $P(L< \infty)=1$. 
\end{theorem}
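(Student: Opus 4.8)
The plan is to reduce Theorem~\ref{thm: stability pq >1} to the already-established Theorem~\ref{theorem: stability} by showing that the hypotheses $E[S^{p}]<\infty$ and $E[X^{1/p}]<\infty$ force $E[\max_{1\leq i\leq X}S_{i}]<\infty$. In other words, once we verify the bound \eqref{ineq: Sx}, namely $E[S_{(X)}]\leq\{E[S^{p}]\}^{1/p}\,E[X^{1/p}]$, the right-hand side is finite by assumption, so the sojourn time of a super-customer has finite mean, and Theorem~\ref{theorem: stability} immediately yields $P(L<\infty)=1$.

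The key step is therefore to prove the inequality \eqref{ineq: Sx}. I would do this by conditioning on $X$ and applying Lemma~\ref{lemma: max} pointwise. Specifically, by independence of $X$ and $(S_{1},S_{2},\dots)$, conditioning on $\{X=n\}$ leaves the conditional marginals of the $S_{i}$ equal to that of $S$, so Lemma~\ref{lemma: max} gives $E[S_{(X)}\mid X=n]\leq\{n\,E[S^{p}]\}^{1/p}=n^{1/p}\{E[S^{p}]\}^{1/p}$ for each $n\geq 1$ (and the bound is trivially $0$ when $n=0$ under our convention $S_{(0)}=0$). Taking expectations over $X$ and using the tower property,
\begin{align}
E[S_{(X)}] = E\bigl[E[S_{(X)}\mid X]\bigr] \leq \{E[S^{p}]\}^{1/p}\,E[X^{1/p}],
\end{align}
which is \eqref{ineq: Sx}. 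Here the independence of $X$ from the family $\{S_{i}\}$ is essential: it is what makes the conditional distribution of each $S_{i}$ given $X$ coincide with the law of $S$ so that Lemma~\ref{lemma: max} applies verbatim inside the conditional expectation.

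The main obstacle I anticipate is purely a matter of bookkeeping around the conditioning argument rather than any deep difficulty: one must be careful that Lemma~\ref{lemma: max} is stated for a fixed deterministic $n$ with random variables sharing a common marginal, so to invoke it ``inside'' the conditional expectation $E[\cdot\mid X=n]$ one needs that, conditionally on $X=n$, the first $n$ sojourn times still have common marginal equal to that of $S$ — which follows from the independence hypothesis, but should be spelled out. A secondary minor point is the edge case $X=0$; with the stated convention $S_{(0)}=0$ the bound holds since $E[X^{1/p}]$ simply ignores that atom. Once \eqref{ineq: Sx} is in hand, the conclusion is immediate from Theorem~\ref{theorem: stability}, so no further estimates on $L$ itself are needed.
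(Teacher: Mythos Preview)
Your proposal is correct and follows essentially the same route as the paper: reduce to Theorem~\ref{theorem: stability} by verifying \eqref{ineq: Sx}, which you obtain by conditioning on $X$, applying Lemma~\ref{lemma: max} to get $E[S_{(X)}\mid X=n]\leq\{nE[S^{p}]\}^{1/p}$, and then unconditioning. The paper's proof is identical in structure, only more terse; your additional remarks on why independence is needed for the conditional marginals and on the $X=0$ edge case are welcome clarifications but do not change the argument.
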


\begin{proof}
By Theorem \ref{theorem: stability}, it is enough to show $E[S_{(X)}] < \infty$.   Since $E[S^{p}]< \infty$, using Lemma \ref{lemma: max} and the independence of $X$ and $S$, we have
\begin{align}
E[S_{(X)}|X=n] \leq \{n E[S^{p}]\}^{1/p}, 
\end{align}
Unconditioning on $X$ gives
\begin{align}
E[S_{(X)}] \leq \{ E[S^{p}]\}^{1/p} E[X^{1/p}]. 
\end{align}
which proves our claim.
\end{proof}

\section{Infinite server queues with Poisson batch arrival}\label{Appendix:Infinite Server Queues with Poisson Batch Arrival}
In this section, we assume that batches of customers arrive as a Poisson process with the rate $\lambda$.  All customers in the same batch should depart by the time $S_{(X)} = \max_{1\leq i \leq X}S_{i}$.  Let $L(t)$ be the number of customers in the system at time $t$ starting from $L(0)=0$, and let $L= \lim_{t\to\infty}L(t)$ be its weak limit.

\begin{lemma}\label{thm: transient pgf}
The probability generating function of $L(t)$ is given by
\begin{align}\label{eq:transient pgf}
E[z^{L(t)}]=\exp \left[ - \lambda \left\{ \int_{0}^{t}\left(1- E\left[z^{M(s)}\right]\right)ds \right\}\right].
\end{align}
for $|z| \leq 1$ where $M(t)=\sum_{i=1}^{X}1_{\{S_{i}>t\}}$.  
\end{lemma}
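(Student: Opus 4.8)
The plan is to exploit the order-statistics property of the Poisson batch arrival process. Conditioned on the number $N$ of batches arriving in $[0,t]$ --- a Poisson random variable with mean $\lambda t$ --- the arrival epochs are i.i.d.\ uniform on $[0,t]$, and distinct batches carry independent batch sizes and sojourn-time vectors. Since $L(0)=0$, only these $N$ batches contribute to $L(t)$.

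First I would record the pathwise identity
\begin{align}
L(t) = \sum_{j=1}^{N} M_j(t-U_j),
\end{align}
where $U_j$ is the $j$-th arrival epoch and $M_j$ is an independent copy of the process $M$: a batch of ``age'' $t-U_j$ contributes exactly $\sum_i 1_{\{S_i > t-U_j\}}$ survivors at time $t$, a quantity with the same law as $M(t-U_j)$. Because each batch is a.s.\ finite and $N<\infty$ a.s., this sum is a.s.\ finite --- there is no convergence subtlety here, in contrast to the stationary quantity $L$.

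Then I would condition on $N$ and the arrival epochs, use batch independence to factor $E[z^{L(t)}\mid N,\,U_1,\dots,U_N] = \prod_{j=1}^{N} E[z^{M(t-U_j)}]$, average over the uniform epochs (with the substitution $s=t-u$) to obtain $E[z^{L(t)}\mid N=n] = \bigl(t^{-1}\int_0^t E[z^{M(s)}]\,ds\bigr)^n$, and finally sum the resulting exponential series against the Poisson weights of $N$ to get
\begin{align}
E[z^{L(t)}] = \exp\!\left[-\lambda t + \lambda\int_0^t E[z^{M(s)}]\,ds\right] = \exp\!\left[-\lambda\int_0^t\bigl(1-E[z^{M(s)}]\bigr)\,ds\right],
\end{align}
which is \eqref{eq:transient pgf}. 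Equivalently, one can read this off the probability generating functional of the marked Poisson process of batches (marked by their sojourn-time vectors) via Campbell's formula.

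I do not anticipate a genuine obstacle. The only points requiring care are the justification that the survival thinning acts independently across batches and yields exactly the law of $M(t-u)$ for a batch of age $t-u$, and the measurability/integrability needed to interchange the expectation, the sum over $n$, and the integral over $[0,t]$ --- all routine since $|z^{M(s)}|\le 1$ for $|z|\le 1$.
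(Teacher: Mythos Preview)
Your proposal is correct and follows essentially the same route as the paper: condition on the number of batches $N(t)$, use the order-statistics property to replace the arrival epochs by i.i.d.\ uniforms on $[0,t]$, factor over independent batches to get $E[z^{L(t)}\mid N(t)=n]=\bigl(t^{-1}\int_0^t E[z^{M(s)}]\,ds\bigr)^n$, and then uncondition via the Poisson law of $N(t)$. The paper compresses your last step by invoking the compound-Poisson pgf identity directly, but the argument is the same.
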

\begin{proof}
Let $M_{j}(t)$ be the number of customers remaining in the system at time $t$ from the $j$-th batch that arrives at time $T_{j}$. The batch arrival time $T_{j}=s$, $M_{j}(t)$ has the same distribution as $M(t-s)$. Since $T_{j}$ is uniformly distributed on the interval $(0,t)$ when conditioning on $N(t)$ (the number of batches arriving up to time $t$), we have
\begin{align}
E[z^{M_{j}(t)}|N(t)]&= \frac{1}{t}\int_{0}^{t}E\left[z^{M_{j}(t)}\Big|N(t),T_{j}=s\right]ds, \\
&=\frac{1}{t}\int_{0}^{t}E\left[z^{M(t-s)}\right]ds, \\
&=\frac{1}{t}\int_{0}^{t}E\left[z^{M(s)}\right]ds,
\end{align}
for $1\leq j \leq N(t)$.  Since $N(t)$ is a Poisson process, we have
\begin{align}
E[z^{L(t)}]&= E\left[ z^{\sum_{j=1}^{N(t)}M_{j}(t)}\right], \\
&= \exp \left[ \lambda t \left\{E\left[ z^{M_{j}(t)}\right] -1\right\}\right].
\\&=\exp \left[ - \lambda \left\{ \int_{0}^{t}\left(1- E\left[z^{M(s)}\right]\right)ds \right\}\right].
\end{align}
\end{proof}

\begin{theorem}\label{thm: ESX finite}
The $M^{X}/G/\infty$ queue with correlated sojourn times in the batch is stable in the sense that $P(L<\infty) = 1$, if and only if $E[S_{(X)} ] <\infty$.

In this case, the probability generating function of $L$ is given by
\begin{align}\label{eq: stationary pgf}
E\left[z^{L}\right]=\exp \left[ - \lambda \left\{ \int_{0}^{\infty}\left(1- E\left[z^{M(s)}\right]\right)ds \right\}\right]. 
\end{align}
for $|z| \leq 1,$ where $M(t)=\sum_{i=1}^{X}1_{\{S_{i}>t\}}$. Furthermore, when the sojourn times are independent and identically distributed, 
\begin{align}\label{eq: stationary pgf for independent sojourn time}
E\left[z^{M(s)}\right]= \phi(z P(S>s) + P(S\leq s)]. 
\end{align}
where $\phi(z) = E[z^{X}]$ is the generating function of the batch size $X$.
\end{theorem}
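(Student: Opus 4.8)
The plan is to establish Theorem \ref{thm: ESX finite} in three parts: the stability equivalence, the generating function formula \eqref{eq: stationary pgf}, and the i.i.d.\ specialization \eqref{eq: stationary pgf for independent sojourn time}. For the stability equivalence, the ``if'' direction follows immediately from Theorem \ref{theorem: stability}: a Poisson batch arrival has finite rate $\lambda$, so $E[S_{(X)}]<\infty$ gives $P(L<\infty)=1$. The ``only if'' direction is the genuinely new content; I would argue by contraposition. Assume $E[S_{(X)}]=\infty$. Consider the super-customers of Appendix \ref{Appendix: Stability Criteria}, who arrive according to the same Poisson process at rate $\lambda$ with i.i.d.\ sojourn times $S_{(X)}$. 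Since $L \geq K = \sum_{k\geq 0} 1_{B_{-k}}$, it suffices to show $P(K=\infty)=1$. The events $B_{-k}=\{S_{(X),-k}>-T_{-k}\}$ are independent (Poisson arrivals, independent marks), and $E[K]=\sum_k P(B_{-k})=\lambda E[S_{(X)}]=\infty$ by Little's formula / the renewal computation $\sum_k P(S_{(X)}>-T_{-k}) = \lambda \int_0^\infty P(S_{(X)}>s)\,ds$. The second Borel--Cantelli lemma (applicable precisely because these events are independent) then yields $P(B_{-k}\text{ i.o.})=1$, hence $P(K=\infty)=1$ and $P(L=\infty)=1$.

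For the generating function formula \eqref{eq: stationary pgf}, I would pass to the limit in Lemma \ref{thm: transient pgf}. Since $L(t)\uparrow L$ monotonically as batches are added from the past, by monotone convergence $E[z^L]=\lim_{t\to\infty}E[z^{L(t)}]$ for $z\in[0,1)$, and the integrand $1-E[z^{M(s)}]\geq 0$ is nonnegative, so $\int_0^t(1-E[z^{M(s)}])\,ds$ increases to $\int_0^\infty(1-E[z^{M(s)}])\,ds$. Thus \eqref{eq: stationary pgf} holds, with the understanding that the right side may be $\exp(-\infty)=0$; and $E[z^L]>0$ for some (equivalently all) $z\in(0,1)$ exactly when the integral is finite. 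One should check that finiteness of $\int_0^\infty(1-E[z^{M(s)}])\,ds$ is equivalent to $E[S_{(X)}]<\infty$: bounding $1_{\{M(s)\geq 1\}} \leq 1-z^{M(s)} \leq 1$ when restricted to $\{M(s)\geq 1\}$, or more simply using $(1-z)M(s) \geq 1-z^{M(s)} \geq (1-z)1_{\{M(s)\geq 1\}}$ together with $\int_0^\infty P(M(s)\geq 1)\,ds = E[S_{(X)}]$ and $\int_0^\infty E[M(s)]\,ds = X E[S]$-type quantities, shows the integral and $E[S_{(X)}]$ are finite together; this dovetails with the equivalence already proved.

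For \eqref{eq: stationary pgf for independent sojourn time}, with i.i.d.\ sojourn times, I would condition on $X=n$: then $M(s)=\sum_{i=1}^n 1_{\{S_i>s\}}$ is a sum of $n$ independent Bernoulli$(P(S>s))$ variables, so $E[z^{M(s)}\mid X=n] = (zP(S>s)+P(S\leq s))^n$. Unconditioning on $X$ using independence of $X$ and the $S_i$ gives $E[z^{M(s)}] = \phi(zP(S>s)+P(S\leq s))$, which is \eqref{eq: stationary pgf for independent sojourn time}.

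I expect the main obstacle to be the ``only if'' direction's rigor: one must verify that $B_{-k}$ are genuinely independent (this uses that the batch arrival is Poisson so that the super-customers form a marked Poisson process, with the mark $S_{(X),-k}$ depending only on batch $-k$) and that $\sum_k P(B_{-k}) = \lambda E[S_{(X)}]$ exactly—the cleanest route is the stationary-marked-Poisson / Campbell computation $E[K] = \lambda \int_0^\infty P(S_{(X)}>s)\,ds = \lambda E[S_{(X)}]$ rather than an informal appeal to Little's formula. Everything else is a routine limiting argument or a direct conditional computation.
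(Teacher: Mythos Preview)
Your proposal is correct but takes a genuinely different route to the stability equivalence than the paper does. The paper proves \emph{both} directions analytically through the generating function: for the ``if'' part it bounds $1-E[z^{M(s)}]\le P(M(s)>0)=P(S_{(X)}>s)$, uses $E[S_{(X)}]<\infty$ to get an integrable dominant, and applies dominated convergence to show $\lim_{z\to 1}E[z^{L}]=1$; for the ``only if'' part it bounds $1-E[z^{M(s)}]\ge (1-z)P(M(s)>0)$ and argues that $P(L<\infty)=1$ forces $\lim_{z\to 1}(1-z)E[S_{(X)}]=0$, hence $E[S_{(X)}]<\infty$. You instead invoke Theorem~\ref{theorem: stability} for the ``if'' direction and the second Borel--Cantelli lemma for the ``only if'' direction. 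The paper explicitly acknowledges your route in its Remark following the theorem, noting that the reverse Borel--Cantelli applies because the $B_{-k}$ are independent under Poisson arrivals, but prefers the generating-function argument because it simultaneously delivers \eqref{eq: stationary pgf} and the transient dynamics. Your approach is more directly probabilistic and cleanly separates stability from the pgf computation; the paper's approach is self-contained within the pgf machinery. One caution you already flag: the events $B_{-k}=\{S_{(X),-k}>-T_{-k}\}$ are not literally independent, since the ordered arrival times $T_{-k}$ are dependent; the rigorous version is either a conditional Borel--Cantelli (conditioning on the arrival times) or, as you suggest, the marked-Poisson computation that shows $K$ is Poisson with mean $\lambda E[S_{(X)}]$ and hence $K=\infty$ a.s.\ when that mean is infinite. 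For \eqref{eq: stationary pgf} and \eqref{eq: stationary pgf for independent sojourn time} your arguments match the paper's.
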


Here, we also provide examples of the generating function of the $p$-th order fractional power law distributions \eqref{eq: Power Law Type}:
\begin{align}\label{eq: examples of pgf}
E[z^{X}] =
{\small \begin{cases}, 
1-2(1-z)\{1 - (1-z) \log(1 - z)\}/z^2  &\text{ $(p =2)$},\\
1 - (1 - 1/z) \log(1 - z)  &\text{ $(p =1)$},\\
1- \left(\frac{1-z}{z}\right)^{1/2}\sin ^{-1}\left(\sqrt{z}\right) &\text{ $(p =1/2)$},
\end{cases}}
\end{align}
which are used in Section \ref{section:Battles against explosive demand surges}.

\begin{proof}
Taking $t\to\infty$ formally in \eqref{eq:transient pgf}, we have the generating function of the weak limit $L$ as
\begin{align}\label{eq:P(L)}
E\left[z^{L}\right]=\exp \left[ - \lambda \left\{ \int_{0}^{\infty}\left(1- E\left[z^{M(s)}\right]\right)ds \right\}\right],
\end{align}
which is an extension of the equation (6) in \cite{Cong:1994fk} to the case of correlated sojourn times in the batch.  

First, let us assume that $E[S_{(X)} ] <\infty$. We then check the weak limit of $L$ as a proper random variable satisfying $P(L<\infty) = 1$. It is easy to see that $1- E\left[z^{M(s)}\right] \leq P(M(s)>0) = P(S_{(X)}>s)$ for all $s \in [0,\infty)$. Furthermore, $P(S_{(X)}>s)$ is integrable on $[0, \infty)$, because $\int_{0}^{\infty}P(S_{(X)}>s)ds = E[S_{(X)}] <\infty$. Using the dominant convergence theorem, we can change the order of the limit and the integral to have
\begin{align}
P(L<\infty) &= \sum_{n=0}^{\infty}P(L=n) = \lim_{z\to 1}E\left[z^{L}\right], \\
&=\exp \left[ - \lambda \left\{ \int_{0}^{\infty}\left(1- \lim_{z\to 1}E\left[z^{M(s)}\right]\right)ds \right\}\right]=1.
\end{align}
 Conversely, we assume $P(L<\infty) =1$. Then, the integral in \eqref{eq:P(L)} should be equal to zero when $z=1$. Since
\begin{align}
E\left[z^{M(s)}\right] &\leq \sum_{n=1}^{\infty}zP(M(s)=n)+P(M(s)=0), \\
&= 1 - (1-z) P(M(s)>0)
\end{align}
for $0 \leq z \leq 1$, we have 
\begin{align}
0&=\lim_{z\to 1}\int_{0}^{\infty}\left(1- E\left[z^{M(s)}\right]\right)ds, \\
&\geq \lim_{z\to 1}(1-z)\int_{0}^{\infty}P(M(s)>0)ds, \\
&= \lim_{z\to 1}(1-z)E[S_{(X)} ]\geq 0. 
\end{align}
which leads to the conclusion that $E[S_{(X)} ] <\infty$.
\end{proof}

\begin{remark}
As can be seen from Theorem \ref{thm: stability pq >1}, $E[S_{(X)} ] <\infty$ is sufficient for the stability of general arrival processes, but the converse is not always true. However, if the arrival of batches is restricted to Poisson processes, $P(L < \infty) =1$ leads to $E[S_{(X)} ] <\infty$, which can be alternatively proved by the reverse Borel-Cantelli lemma, because $B_{-k}$ in Appendix \ref{Appendix: Stability Criteria} are independent. However, the generating function representation given by \eqref{eq:transient pgf} and \eqref{eq: stationary pgf} provide the detailed dynamics of the system.
\end{remark}

It is known that when the sojourn times are independent exponential random variables \cite{Cong:1994fk} or light-tailed random variables \cite{Yajima:2017fv}, $E[\log X]<\infty$ is a necessary and sufficient condition for stability. Here, we provide an alternative proof for this case below. However, the condition $E[\log X]<\infty$ no longer guarantees stability when the demand is too explosive (see Battle \ref{battle:the explosive demand wins}).  

\begin{corollary}\label{corollary:log bound}
Consider an $M^{X}/M/\infty$ queue with independent exponential sojourn times $S_{1}, S_{2}, \dots S_{X}$ with mean $1/\mu$ in the batch. The queue is stable, that is, $P(L< \infty)=1$, if and only if $E[\log X] < \infty$ \cite{Cong:1994fk,Yajima:2017fv}.  

 In this case, the probability generating function of the stationary distribution $L$ is given by
\begin{align}\label{eq: M/M case}
E\left[z^{L}\right]=\exp \left[ - \frac{\lambda}{\mu} \sum_{n=1}^{\infty}\frac{1-z^{n}}{n}P(X\geq n)\right], 
\end{align}
\end{corollary}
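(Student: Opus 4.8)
The plan is to derive everything from Theorem~\ref{thm: ESX finite} by specializing the identity \eqref{eq: stationary pgf for independent sojourn time} to exponential sojourn times. Since $P(S>s)=e^{-\mu s}$, conditioning on $X=n$ makes $M(s)$ a sum of $n$ i.i.d.\ Bernoulli$(e^{-\mu s})$ variables, so $E[z^{M(s)}\mid X=n]=(1-(1-z)e^{-\mu s})^{n}$ and hence $E[z^{M(s)}]=\phi(1-(1-z)e^{-\mu s})$ with $\phi(z)=E[z^{X}]$. Consequently $1-E[z^{M(s)}]=E\bigl[1-(1-(1-z)e^{-\mu s})^{X}\bigr]$, and the whole computation reduces to the one-dimensional integral $\int_{0}^{\infty}\bigl(1-(1-(1-z)e^{-\mu s})^{n}\bigr)\,ds$.

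First I would evaluate this integral by the substitution $w=1-(1-z)e^{-\mu s}$, which maps $s=0$ to $w=z$, $s=\infty$ to $w=1$, and gives $ds=dw/\{\mu(1-w)\}$. It becomes $\frac{1}{\mu}\int_{z}^{1}\frac{1-w^{n}}{1-w}\,dw=\frac{1}{\mu}\int_{z}^{1}(1+w+\dots+w^{n-1})\,dw=\frac{1}{\mu}\sum_{k=1}^{n}\frac{1-z^{k}}{k}$. Taking $z=0$ in particular yields $E[S_{(X)}\mid X=n]=\int_{0}^{\infty}\bigl(1-(1-e^{-\mu s})^{n}\bigr)\,ds=H_{n}/\mu$, where $H_{n}=\sum_{k=1}^{n}1/k$ is the $n$-th harmonic number.

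Next I would assemble the pieces. Unconditioning on $X$ and using Tonelli's theorem (every integrand is non-negative) to interchange the $s$-integral, the expectation over $X$, and the $k$-sum, one obtains $\int_{0}^{\infty}\bigl(1-E[z^{M(s)}]\bigr)\,ds=\frac{1}{\mu}\sum_{n=1}^{\infty}\frac{1-z^{n}}{n}P(X\ge n)$; plugging this into \eqref{eq: stationary pgf} gives \eqref{eq: M/M case}. For the stability dichotomy, Theorem~\ref{thm: ESX finite} states that $P(L<\infty)=1$ if and only if $E[S_{(X)}]<\infty$, and unconditioning the formula of the previous paragraph gives $E[S_{(X)}]=\tfrac{1}{\mu}E[H_{X}]$. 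The elementary estimates $\log n\le H_{n}\le 1+\log n$ for $n\ge 1$ then give $E[\log X]\le E[H_{X}]\le 1+E[\log X]$, so $E[S_{(X)}]<\infty$ if and only if $E[\log X]<\infty$, completing the proof.

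The argument is entirely elementary; the only points needing a little care are the justification of the interchanges of sum, integral and expectation (covered by non-negativity and Tonelli, or by absolute convergence for $|z|<1$) and the harmonic-number bound $H_{n}=\log n+O(1)$ that translates the moment condition on $S_{(X)}$ into the stated condition on $X$. I do not expect a serious obstacle here, since the real work---the transient generating function and the equivalence of stability with $E[S_{(X)}]<\infty$---has already been done in Lemma~\ref{thm: transient pgf} and Theorem~\ref{thm: ESX finite}.
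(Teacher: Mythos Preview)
Your proposal is correct and follows essentially the same route as the paper: both condition on $X$, use the substitution that turns $\int_{0}^{\infty}\bigl(1-(1-(1-z)e^{-\mu s})^{n}\bigr)\,ds$ into $\tfrac{1}{\mu}\int_{z}^{1}\frac{1-u^{n}}{1-u}\,du=\tfrac{1}{\mu}\sum_{k=1}^{n}\frac{1-z^{k}}{k}$, swap the order of summation to get \eqref{eq: M/M case}, and then invoke Theorem~\ref{thm: ESX finite} together with the bounds $\log n\le H_{n}\le 1+\log n$ for the stability equivalence. The only cosmetic difference is that you recover $E[S_{(X)}\mid X=n]=H_{n}/\mu$ as the $z=0$ specialization of the same integral, whereas the paper simply quotes $E[S_{(X)}]=E\bigl[\sum_{i=1}^{X}1/i\bigr]$ as the known expectation of the maximum of i.i.d.\ exponentials.
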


\begin{proof}
Since the sojourn times are independent exponential times, we have $E[S_{(X)}] = E\left[\sum_{i=1}^{X}1/i\right]$, and $\sum_{i=1}^{X}1/i$ is bounded as
\begin{align}
\log X \leq \sum_{i=1}^{X}\frac{1}{i} \leq 1 + \log X,
\end{align}
which proves the first part using Theorem \ref{thm: ESX finite}. By conditioning on $X=k$ and using \eqref{eq: stationary pgf for independent sojourn time}, we have
\begin{align}
 \int_{0}^{\infty}&\left(1- E\left[z^{M(s)}\right]\right)ds \\
 &= \int_{0}^{\infty}\sum_{k=1}^{\infty}\left[1- \{ze^{-\mu s}+ (1-e^{-\mu s})\}^{k} \right]P(X=k)ds,  \\
 &= \frac{1}{\mu}\sum_{k=1}^{\infty}P(X=k)\int_{z}^{1}\frac{(1-u^{k})}{1-u}du, \\
 &=\frac{1}{\mu}\sum_{k=1}^{\infty}P(X=k)\sum_{n=1}^{k}\frac{1-z^{n}}{n}, \\
 &=\frac{1}{\mu}\sum_{n=1}^{\infty}\frac{1-z^{n}}{n}P(X\geq n), 
\end{align}
which proves the second part by combining with \eqref{eq: stationary pgf}.
\end{proof}

We now consider the case where the batch size $X$ is the $p$-th order fractional power law distribution. By \eqref{eq: M/M case}, the probability generating function of the stationary distribution $L$ is given by
\begin{align}
E\left[z^{L}\right]=\exp \left[ - \rho \sum_{n=1}^{\infty}\frac{1-z^{n}}{n} \prod_{i=1}^{n-1}\frac{i}{p+i}\right]. 
\end{align}
for $p \in (0,\infty)$, where $\rho = \lambda/\mu$. Especially, when $p=1$, even though $E[X]$ is infinite, the queue is stable, and
\begin{align}
E\left[z^{L}\right]=\exp \left[ - \rho \sum_{n=1}^{\infty}\frac{1-z^{n}}{n^{2}} \right].
\end{align}


\bibliographystyle{apsrev4-2}
\bibliography{/Users/toyo/Box/Public_Share/References/2018,/Users/toyo/Box/Public_Share/References/2019,/Users/toyo/Box/Public_Share/References/2020}

\end{document}